\newtheorem{lemma}{Lemma}[section]
\newtheorem{corollary}[lemma]{Corollary}
\newtheorem{proposition}[lemma]{Proposition}
\newtheorem{theorem}[lemma]{Theorem}
\theoremstyle{definition}
\newtheorem{remark}[lemma]{Remark}
\newtheorem{open}[lemma]{Open problem}
\numberwithin{equation}{section}
\renewcommand{\epsilon}{\eps}
\renewcommand{\i}{{\rm i}}
\renewcommand{\H}{{\mathcal H}}
\renewcommand{\L}{{\mathscr L}}
\newcommand{\C}{{\mathbb C}}
\newcommand{\N}{{\mathbb N}}
\newcommand{\R}{{\mathbb R}}
\newcommand{\mS}{{\mathbb S}}
\newcommand{\eps}{\varepsilon}
\newcommand{\pnorm}[2][]{\if #1'' \left|#2\right|_p \else \left|#2\right|_{#1} \fi}
\newcommand{\loc}{{\rm loc}}
\renewcommand{\theta}{\vartheta}
\title[{Remarks on rearrangement for nonlocal functionals}]{Some remarks on rearrangement \\ for nonlocal functionals}
\author[H.-M. Nguyen]{Hoai-Minh Nguyen}
\author[M.\ Squassina]{Marco Squassina}
\address[H.-M. Nguyen]{Department of Mathematics \newline\indent
	EPFL SB CAMA \newline\indent
	Station 8 CH-1015 Lausanne, Switzerland}
\email{hoai-minh.nguyen@epfl.ch}
\address[M.\ Squassina]{Dipartimento di Matematica e Fisica \newline\indent
	Universit\`a Cattolica del Sacro Cuore \newline\indent
	Via dei Musei 41, I-25121 Brescia, Italy}
\email{marco.squassina@unicatt.it}
\thanks{The second author is member of {\em Gruppo Nazionale per l'Analisi Ma\-te\-ma\-ti\-ca, la Probabilit\`a e le loro Applicazioni} (GNAMPA) of the {\em Istituto Nazionale di Alta Matematica} (INdAM)}
\subjclass[2010]{46E35, 28D20, 82B10, 49A50}
\keywords{Polarization, nonlocal functionals, characterization of Sobolev spaces.}
\begin{document}

\begin{abstract}
We prove that a nonlocal functional approximating the standard Dirichlet $p$-norm fails to decrease under two-point rearrangement. Furthermore, we get other properties related to this functional  such as decay and compactness.
\end{abstract}
\maketitle
	

\section{Introduction}

The well-known Polya-Szeg\"o inequality \cite{polya1,polya2,bzim} states that if $u\in W^{1,p}(\R^N,\R^+)$ then 
\begin{equation}
\label{PZ}
\int_{\R^N}|\nabla u^*|^p dx\leq \int_{\R^N}|\nabla u|^p dx.
\end{equation}
where $u^*$ is the Schwarz symmetric rearrangement of $u$. This inequality has relevant applications in the study of isoperimetric inequalities, in the 
Faber-Krahn inequality  and in the determination of optimal constants in the Sobolev inequality \cite{aubin,talenti}. This kind of inequalities still holds in the nonlocal case, e.g.\ for the standard fractional norm, namely for $u\in W^{s,p}(\R^N,\R^+)$
\begin{equation}
\label{FPZ}
\iint_{\R^{2N}} \frac{|u^*(x)-u^*(y)|^p}{|x-y|^{N+ps}} dxdy \leq \iint_{\R^{2N}} \frac{|u(x)-u(y)|^p}{|x-y|^{N+ps}} dxdy,
\end{equation}
for $p\geq 1$ and $s\in (0,1)$, see e.g.\ \cite{almgrenlieb,bae}. Actually this inequality 
implies \eqref{PZ} by a straightforward application of a result by Bourgain, Brezis and Mironescu \cite{bourg,bourg2} which confirms that 
\begin{equation}\label{BBM}
\lim_{s \nearrow 1} (1-s)\iint_{\R^{2N}} \frac{|v(x)-v(y)|^p}{|x-y|^{N+ps}} dxdy = K_{N, p}  \int_{\R^N} |\nabla v|^p,\quad  \mbox{ for } v \in W^{1, p}(\R^N),  
\end{equation}
where 
\begin{equation}\label{KNp}
K_{N, p} : = \int_{\mS^{N-1}} |\boldsymbol{e} \cdot \sigma|^p \, d \sigma,\quad 
\mbox{ for some } \boldsymbol{e} \in \mS^{N-1}, \mbox{ the unit sphere in $\R^N$}. 
\end{equation}
Polarization by closed half spaces $H\subset\R^N$ containing the origin is an elementary form of symmetrization and it is a key tool in order to investigate various rearrangements inequalities.  The polarization $u^H$ with respect to $H$, see the definition \eqref{polar-def}, also called two-point rearrangement, essentially compares the values of $u$ on the two sides of $\partial H$ and keeps the largest values inside $H$ and the smallest values outside $H$, cf.\
\cite{dub1,dub2,dub3,brock}.
Since the first achievements obtained in \cite{brock}, the approximation in $L^p(\R^N)$ of $u^*$ via iterated polarizations of $u$ has been refined in various ways. It is now known that there exists an explicit and universal (i.e.\ independent of $u$) sequence of closed half spaces $\{H_n\}_{n\in\N}$ of $\R^N$ containing the origin such that 
a suitable sequence of iterated polarizations of $u$ with respect to $H_n$
strongly converges to $u^*$ in $L^p(\R^N)$, see \cite{jean} and the references therein. 
It is thus natural to derive the rearrangement inequalities \eqref{PZ}-\eqref{FPZ} from 
 the (possible) corresponding inequalities for the polarizations using general weak lower semi-continuity properties. In fact, for any closed half space $H$ with $0\in H$ and any $u\in W^{1,p}(\R^N)$, 
\begin{equation}
\label{ide-case}
\int_{\R^N}|\nabla u^H|^p dx= \int_{\R^N}|\nabla u|^p dx,
\end{equation}
as well as, for any $W^{s,p}(\R^N),$
\begin{equation}
\label{frak-in}
\iint_{\R^{2N}} \frac{|u^H(x)-u^H(y)|^p}{|x-y|^{N+ps}} dxdy \leq \iint_{\R^{2N}} \frac{|u(x)-u(y)|^p}{|x-y|^{N+ps}} dxdy,
\end{equation}
see  \cite{bae,brock} and references included. For applications of polarization techniques, see
\cite{london,royal-squa,pucci1,jean,jean0,linceisqua}.

More recently, a new class of nonlocal functionals has been involved in the study of topological degree of a map \cite{degree1,degree2,degree3}, namely, for $\delta>0$ and 
$1<p<N$,
$$
I_\delta(u):=\iint_{\{|u(y)-u(x)|>\delta\}}\frac{\delta^p}{|x-y|^{N+p}}dxdy.
$$
It turns out that this energy also provides a pointwise approximation 
of $\|\nabla u\|_p^p$ for $p>1$, precisely 
\begin{equation}
\label{Ng-form}
\lim_{\delta\searrow 0} I_\delta(u)=\frac{1}{p} K_{N, p} \int_{\R^N}|\nabla u|^p dx,
\end{equation}
where $K_{N, p}$ is given by \eqref{KNp}, see  \cite{nguyen06,BourNg} (and also \cite{bre,BHN,BHN2,BHN3,NgSob2}). Various properties of Sobolev spaces in terms of $I_\delta$ were investigated in \cite{nguyen11}, for example, it was shown in \cite[Theorem 3]{nguyen11} that, for $1 < p < N$ and for $\delta > 0$,  
\begin{equation}\label{Sobolev}
\left(\int_{\{|u|>\lambda\delta\}}|u|^{\frac{Np}{N-p}}dx\right)^{\frac{N-p}{Np}}\leq C \big(I_\delta(u) \big)^{1/p}, \quad \forall u \in L^p(\R^N), 
\end{equation}
for some positive constants $C,\lambda$ depending only on $N$ and $p$.

The first goal of this note is to prove that, 
however, the nonlocal energy $I_\delta$ {\em fails} to
be decreasing upon polarization. This supports yet again the idea (cf.\ \cite{NgGamma}) that $I_\delta$ is a much more delicate approximation of local norm with respect to the other mentioned above. More precisely, we have 

\begin{theorem}\label{thm-example}
Let $N \ge 1$ and $p \ge 1$. Then there exist $\delta_0>0$ and  a closed half space $H\subset \R^N$ with $0\in H$ such that for any  $0 < \delta < \delta_0$ there exists a measurable function $u:\R^N\to\R$ such that $I_\delta(u^H)>I_\delta(u)$.
\end{theorem}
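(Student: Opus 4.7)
The plan is to observe that, for the non-convex threshold $F(t) := \mathbf{1}_{|t|>\delta}$, the four-point rearrangement inequality that drives the polarization decrease \eqref{frak-in} for convex $\Phi(|\cdot|)$ can actually \emph{fail}; we then realize this failure by a function $u$ built out of four narrow bumps placed at a reflection-symmetric configuration. The homogeneity $I_\delta(\lambda u) = \lambda^p I_{\delta/\lambda}(u)$ lets us reduce to $\delta = 1$ (so $\delta_0$ is essentially arbitrary).

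Fix $H := \{x \in \R^N : x_1 \geq 0\}$ with reflection $\sigma$, pick $p_1 := (1, 0, \ldots, 0)$ and $p_3 := (1, t, 0, \ldots, 0)$ with $0 < t < 1$, and set $p_2 := \sigma(p_1)$, $p_4 := \sigma(p_3)$. Fix a radial Lipschitz bump $\phi : \R^N \to [0, 1]$ with $\phi \equiv 1$ on $B_{1-\varepsilon}$ and $\supp \phi \subset B_1$, where $\varepsilon > 0$ is a small taper width. For $r > 0$ small enough that the balls $B_r(p_i)$ are pairwise disjoint, define
\[
u(x) := \sum_{i=1}^{4} h_i \, \phi\bigl((x - p_i)/r\bigr), \qquad (h_1, h_2, h_3, h_4) := (10,\, 12.5,\, 9.5,\, 8).
\]
Since $h_1 < h_2$ and $h_3 > h_4$, the polarization $u^H$ coincides with $u \circ \sigma$ on $B_r(p_1) \cup B_r(p_2)$ and with $u$ elsewhere.

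Decompose $I_\delta(u) = \sum_{i=1}^{4} W_i + \sum_{i=1}^{4} O_i + \sum_{1 \leq i < j \leq 4} J_{ij}$ into within-bump, bump-to-exterior, and between-bump contributions. Because $\supp(u)$ and its complement are $\sigma$-invariant and $\phi$ is radial, the change of variables $x \mapsto \sigma x$ yields the following \emph{exact} identities: $\sum_i W_i$, $\sum_i O_i$, $J_{12}$, and $J_{34}$ are unchanged by polarization. The only change lives in the four cross-orbit terms $J_{13}, J_{14}, J_{23}, J_{24}$; using $|p_1 - p_3| = |p_2 - p_4|$ and $|p_1 - p_4| = |p_2 - p_3|$, their total polarization defect equals
\[
2 \delta^p (\kappa - \kappa') \, r^{2N} \bigl[ \Lambda(h_2, h_3) + \Lambda(h_1, h_4) - \Lambda(h_1, h_3) - \Lambda(h_2, h_4) \bigr] \bigl(1 + O(r)\bigr),
\]
where $\kappa := |p_1 - p_3|^{-N-p} > \kappa' := |p_1 - p_4|^{-N-p}$ (since $p_1, p_3 \in H^\circ$) and $\Lambda(h, h') := \iint_{B_1 \times B_1} F(|h \phi(\xi) - h' \phi(\eta)|) \, d\xi \, d\eta$. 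For $\varepsilon$ small one has $\Lambda(h, h') = F(|h - h'|) |B_1|^2 + O(\varepsilon)$, and with our values
\[
F(|h_2 - h_3|) + F(|h_1 - h_4|) - F(|h_1 - h_3|) - F(|h_2 - h_4|) = F(3) + F(2) - F(0.5) - F(4.5) = 1 > 0,
\]
in stark contrast to the nonpositive value this quantity would take were $F$ replaced by any convex increasing function of $|\cdot|$ (as the classical rearrangement inequality requires). Taking first $\varepsilon$, then $r$, sufficiently small makes the defect strictly positive, whence $I_\delta(u^H) > I_\delta(u)$.

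The main obstacle is the careful verification of the \emph{exact} invariance of $\sum_i W_i, \sum_i O_i, J_{12}, J_{34}$ under polarization, which ensures that no additional polarization defect can compete with the $\Theta(r^{2N})$ positive cross-orbit contribution. The key input is the identity $u^H = u \circ \sigma$ on the swap orbit $B_r(p_1) \cup B_r(p_2)$, combined with the $\sigma$-symmetry of both $\supp(u)$ and its complement: the change of variables $x \mapsto \sigma x$ then pairs each ``after'' contribution to these four blocks with a matching ``before'' counterpart, leaving only the cross-orbit change, whose sign is governed precisely by the discrete failure of rearrangement for the non-convex $F$.
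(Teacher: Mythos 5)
Your proposal is correct and takes a genuinely different route from the paper. The paper's proof works directly with an explicit one-dimensional piecewise-constant/linear profile (values roughly $\delta$, $-2\eps\delta$, $-\eps\delta$, $\delta-\eps\delta$ on consecutive intervals) and carries out a quantitative estimate showing the polarization defect is of order $\delta$ (or $\delta|\ln\delta|$ when $p=1$), dominating the competing contributions of order $\eps\delta$ and $\delta^p$; it then extends to $N\ge 2$ via a product ansatz. Your argument instead isolates the structural obstruction: the four-point rearrangement inequality
\[
F\bigl(|b-c|\bigr)+F\bigl(|a-d|\bigr)\le F\bigl(|a-c|\bigr)+F\bigl(|b-d|\bigr),\qquad a\le b,\ c\ge d,
\]
which drives \eqref{Decomposition-3-1} and which is valid for convex $F$, fails for the non-convex threshold $F=\mathbf{1}_{t>\delta}$; you realize this failure geometrically with four $\sigma$-symmetric bumps, and the exact $\sigma$-invariance of the within-bump, bump-to-exterior, and same-orbit cross terms leaves the cross-orbit defect, weighted by $\kappa-\kappa'>0$, as the sole surviving contribution. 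This is cleaner conceptually (the sign of the defect is read off a four-number check such as $F(3)+F(2)-F(0.5)-F(4.5)=1$) and gives a uniform treatment of all $p\ge 1$ and all dimensions, at the cost of a slightly more elaborate bookkeeping; the paper's computation is more hands-on but gives explicit rates. The only correction needed is that your choice $p_3=(1,t,0,\ldots,0)$ presupposes $N\ge 2$; for $N=1$ (or uniformly in $N$) simply take $p_3=(1+t,0,\ldots,0)$ so that $p_1,p_3$ both lie in the interior of $H$ along $e_1$, which preserves $|p_1-p_3|=t<|p_1-p_4|$ and the rest of the argument verbatim. You might also note that taking $\phi=\mathds{1}_{B_1}$ removes the $O(\eps)$ taper error entirely, since the theorem only requires $u$ measurable.
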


The proof of Theorem~\ref{thm-example} is given in Section~\ref{sect-example}.  In Section~\ref{sect-FPZ}, we present a proof of \eqref{FPZ}. 

\smallskip 
The second goal of this note is to prove some facts related to symmetric functions. Precisely:

\vskip4pt
\noindent
$\bullet$ ({\em Global compactness and $I_\delta$}). 
Let $1<p<N$, $(\delta_n) \to 0_+$,  and let $\{u_n\}_{n\in\N}$ be a sequence of radially symmetric decreasing functions. Assume that 
$$
\{u_n\}_{n \in \N} \mbox{ is bounded in } L^p(\R^N) \mbox{ and } \{ I_{\delta_n}(u_n) \}_{n \in \N} \mbox{ is bounded in $\R$}. 
$$
Then $\{ u_n\}_{n\in\N}$ is pre-compact in  $L^r(\R^N)$ for every $p<r<Np/(N-p)$.
\vskip4pt
\noindent
$\bullet$ ({\em Decay and integrability of $I_\delta$}). If $u:\R^N\to\R^+$
and there exists $\vartheta>0$ with
\begin{equation*}
	\int_0^\infty \frac{I_\delta(u^*)^{{\vartheta/p}}}{\delta} d\delta<\infty,
\end{equation*}
then there exists $C>0$ depending on $u,N,p,\vartheta$ such that
$0\le u^*(x)\le C |x|^{-(N-p)/p}.$


\vskip3pt
\noindent
The proof of these two facts is given in Section~\ref{sect-compact} (see 
Theorems \ref{decay} and \ref{globalc}).



\section{Symmetrization inequalities}

\subsection{The defect of decreasingness of $I_\delta$}
In the following $H$ will denote a closed half-space of $\R^N$ containing the origin.
We denote by ${\mathcal H}$ the set of these closed half-spaces. 
A reflection $\sigma_H:\R^N\to\R^N$ with respect to $H$
is an isometry such that $\sigma_H^2={\rm Id}$ and 
$|x-y|<|x-\sigma_H(y)|$ for all $x,y\in H$.
We also set $H^c:=\R^N\setminus H$. Given $x\in\R^N$, $\sigma_H(x)$ 
will also be denoted by $x^H$.
	The polarization (or two-point rearrangement) of a nonnegative real valued function
	$u:\R^N\to\R^+$ with respect to a given $H$ is defined as 
	\begin{equation}
	\label{polar-def}
	u^H(x):=
	\begin{cases}
	\max\{u(x),u(\sigma_H(x))\}, & \text{for $x\in H$}, \\
	\min\{u(x),u(\sigma_H(x))\}, & \text{for $x\in H^c$}.
	\end{cases}
	\end{equation}

\noindent
Let us set 
\begin{equation*}
A:= \big\{x \in \mbox{int}(H): u(x) \le u(x^H) \big\}, \quad B :=  \big\{x \in \mbox{int}(H): u(x) >u(x^H) \big\},
\end{equation*}
\begin{equation*}
C := \big\{y \in H^c: u(y) \ge u(y^H) \big\}, \quad \mbox{ and } \quad D :=  \big\{y \in H^c: u(y) < u(y^H) \big\}.
\end{equation*}
It is clear that 
$$
C = \sigma(A) \quad \mbox{ and } \quad D = \sigma(B). 
$$
We have 
\begin{equation}\label{I-decomposition}
I_\delta = I_\delta^{(A \cup C) (A \cup C)} + I_\delta^{(B \cup D) (B \cup D)}+ I_\delta^{(A\cup C)(B \cup D)} + I_\delta^{(B \cup D) (A \cup C)},  
\end{equation}
where, for two measurable subsets $O, \, P $ of $\R^N$, we denote 
$$
I_\delta^{OP}(u) : = \mathop{\int_O \int_P}_{\{|u(y)-u(x)|>\delta\}}\frac{\delta^p}{|x-y|^{N+p}}dxdy. 
$$
We claim that 
\begin{equation}\label{Decomposition-1}
I_\delta^{(A \cup C) (A \cup C)} (u^H) = I_\delta^{(A \cup C) (A \cup C)} (u), 
\end{equation}
\begin{equation}\label{Decomposition-2}
I_\delta^{(B \cup D) (B \cup D)}(u^H) = I_\delta^{(B \cup D) (B \cup D)}(u).  
\end{equation}
We begin with \eqref{Decomposition-1}. Since 
$$
u^H(x) = u(x^H) \mbox{ if } x \in A \cup C \quad \mbox{ and } \quad |x -y| = |x^H - y^H|, 
$$
by a change of variable $x = x^H$ and $y = y^H$, we obtain 
$$
I_\delta^{(A \cup C) (A \cup C)}(u^H) = I_\delta^{(A \cup C) (A \cup C)}(u), 
$$
which is \eqref{Decomposition-1}.  We next establish \eqref{Decomposition-2}. This is a direct consequence of the fact
$$
u^H(x) = u(x),\quad \mbox{if $x \in B \cup D$}. 
$$
We next concern about the validity of the inequality:
\begin{equation}\label{Decomposition-3}
I_\delta^{(A\cup C)(B \cup D)}(u^H)  \le I_\delta^{(A\cup C)(B \cup D)}(u). 
\end{equation}
By a  change of variables, we obtain 
\begin{equation*}
I_\delta^{(A\cup C)(B \cup D)}(v) 
= \int_A \int_B  \left(\frac{\L(v, x, y)}{|x-y|^{N+p}} + \frac{\L(v, x, y^H)}{|x - y^H|^{N+p}} + \frac{\L(v, x^H, y)}{|x^H - y|^{N+p}} + \frac{\L(v, x^H, y^H)}{|x^H - y^H|^{N+p}}  \right) \, dx \, dy, 
\end{equation*}
where 
$$
\L(v, x, y) := \delta^p\, \mathds{1}_{\{|v(x) - v(y)|>\delta\}}(x, y),
\,\,\quad x,y\in \R^N.
$$
We have, for $x \in A$ and $y \in B$,  
$$
u^H(x) = u(x^H), \quad u^H(x^H) = u(x), \quad u^H(y) = u(y), \mbox{ and } \quad u^H(y^H) = u(y^H). 
$$
It follows that, for $x \in A$ and $y \in B$,  
\begin{equation*}
\frac{\L(u^H, x, y)}{|x-y|^{N+p}} = \frac{\L(u, x^H, y)}{|x-y|^{N+p}}, \quad  \frac{\L(u^H, x, y^H)}{|x - y^H|^{N+p}} = \frac{\L(u, x^H, y^H)}{|x - y^H|^{N+p}}
\end{equation*}
and
\begin{equation*}
\frac{\L(u^H, x^H, y)}{|x^H-y|^{N+p}} = \frac{\L(u, x, y)}{|x^H-y|^{N+p}}, \quad  \frac{\L(u^H, x^H, y^H)}{|x^H - y^H|^{N+p}} = \frac{\L(u, x, y^H)}{|x - y|^{N+p}}.
\end{equation*}
Setting
\begin{equation*}
{\mathscr D}_\delta^H(u, x,y):=\L(u, x^H, y) + \L (u, x, y^H) - \L(u, x, y) - \L(u, x^H, y^H),
\end{equation*}
we derive that inequality~\eqref{Decomposition-3} is equivalent to 
\begin{equation}
\label{Decomposition-3-1}
\mathfrak{d}_\delta^H(u):=\int_A \int_B {\mathscr D}_\delta^H(u, x,y)
\Big(\frac{1}{|x-y|^{N+p}} - \frac{1}{|x^H- y|^{N+p}} \Big) \, dx \, dy \leq 0. 
\end{equation}
On the other hand, in general, concrete examples show that the inequality ${\mathscr D}_u(x,y)$ fails, so that it is expected that $I_\delta(u^H)-I_\delta(u)$ can be positive for some $H,u$ and $\delta>0$, in which case the quantity $\mathfrak{d}_\delta^H(u)$ provides
a measure of the defect of decreasingness.

\begin{remark}[Vanishing defect]\rm 
For any closed half space $H\subset \R^N$ with $0\in H$ there holds, for $p>1$, 
$$
\lim_{\delta\searrow 0} \mathfrak{d}_\delta^H(u)=0,\quad \text{for all $u\in W^{1,p}(\R^N)$}.
$$
In fact, we know that $u^H\in W^{1,p}(\R^N)$ also and $\|\nabla u^H\|_{L^p(\R^N)}=\|\nabla u\|_{L^p(\R^N)}$ from formula \eqref{ide-case}. Furthermore, from formulas \eqref{I-decomposition}-\eqref{Decomposition-3}, we infer
$$
 \mathfrak{d}_\delta^H(u)=\frac{1}{2}I_\delta(u^H)-\frac{1}{2} I_\delta(u),\quad
 \text{for all $\delta>0$}.
$$
Then, from equality \eqref{Ng-form}, we conclude
$$
\lim_{\delta\searrow 0} \mathfrak{d}_\delta^H(u)=
\frac{1}{2}\lim_{\delta\searrow 0} I_\delta(u^H)-\frac{1}{2}\lim_{\delta\searrow 0} I_\delta(u)=
\frac{K_{N, p}}{2p}\big(\|\nabla u^H\|_{L^p(\R^N)}^p-\|\nabla u\|_{L^p(\R^N)}^p\big)=0,
$$
proving the assertion.
\end{remark}

\subsection{Proof of Theorem~\ref{thm-example}}
\label{sect-example}

We first deal with the case $N=1$.  Here is a counterexample to \eqref{Decomposition-3-1} with $p \in [1, + \infty)$.  Fix $\eps \in (0, 1/8)$ and 
let $u: \R \to \R$ be defined by
\begin{equation}\label{MM-1}
u(x) = \left\{ \begin{array}{cl} \delta & \mbox{ for } x \in (-2, -1], \\[6pt]
\mbox{ linear } & \mbox{ for } x \in [-1, -1 + \delta], \\[6pt] 
- 2 \eps \delta & \mbox{ for } x \in [-1 + \delta, 0), \\[6pt]
-\eps \delta  & \mbox{ for } x \in (0, 1), \\[6pt]
\delta - \eps \delta & \mbox{ for } x \in (1, 2),\\[6pt]
0 & \mbox{ for } x \not \in (-2, 2). 
\end{array}\right.
\end{equation}
Let $H = [0, \infty)$ and $\sigma$ be the standard reflection.  
It is clear that $u^H$ satisfies
\begin{equation}\label{MM-2}
u^H(x) = \left\{ \begin{array}{cl} \delta - \eps \delta & \mbox{ for } x \in (-2, -1], \\[6pt]
- 2 \eps \delta  & \mbox{ for } x \in [-1 + \delta, 0), \\[6pt]
-\eps \delta & \mbox{ for } x \in (0, 1-\delta), \\[6pt]
\delta & \mbox{ for } x \in (1, 2), \\[6pt]
0 & \mbox{ for } x \not  \in (-2, 2). 
\end{array}\right.
\end{equation}
We derive from \eqref{MM-1} and \eqref{MM-2} that 
\begin{multline*}
I_\delta(u^H) - I_{\delta} (u) \ge \int_0^{1-\delta} \int_1^2 \frac{\delta^p}{|x-y|^{p+1}} \, dx \, dy \\[6pt] - 2 \mathop{\int_{-1}^{-1 + \delta} \int_{-2}^0}_{\{|u(x) - u(y)| > \delta\}} \frac{\delta^p}{|x - y|^{p+1}} \, dx \, dy - 2 \int_{-1}^{0} \int_{1}^2\frac{\delta^p}{|x - y|^{p+1}} \, dx \, dy. 
\end{multline*}
A straightforward computation yields, for small $\delta$ and $\eps$,  
\begin{equation*}
\int_0^{1-\delta} \int_1^2 \frac{\delta^p}{|x - y|^{p+1}} \, dx \, dy \simeq \left\{\begin{array}{cl}
\delta |\ln \delta| &  \mbox{ if } p = 1 \\[6pt]
\delta  & \mbox{ if } p>1, 
\end{array}\right.
\end{equation*}
\begin{equation*}
\mathop{\int_{-1}^{-1 + \delta} \int_{-2}^0}_{\{|u(x) - u(y)| > \delta\}} \frac{\delta^p}{|x - y|^{p+1}} \, dx \, dy \simeq \eps \delta, 
\end{equation*}
and 
\begin{equation*}
\int_{-1}^{-0} \int_{1}^2\frac{\delta^p}{|x - y|^{p+1}} \, dx \, dy \simeq \delta^p. 
\end{equation*}
We obtain that, for small positive $\delta$ and $\eps$, we have $I_\delta(u^H) > I_{\delta}(u)$.

\medskip This example can be modified to obtain similar conclusion in the case $\{ 0 \} \in H$ by considering the function $u( \cdot + c)$ for some $c>0$ where $u$ is given above.  In the above example, the function $u$ is not non-negative. However, this point can be handled by considering the function given by  $ u(x) + 2 \eps \delta$ if $|x| < 3$ and 0 otherwise. 

\medskip 
We next consider the case $N \ge 2$. Set 
$$
H = [0, +\infty) \times \R^{N-1}
$$ 
and define $U: \R^N \to \R$ as follows, for $(x_1, x') \in \R \times \R^{N-1}$,  
\begin{equation*}
U(x_1, x') = \left\{ \begin{array}{cl} u(x_1) - \delta/2 & \mbox{ if } |x|_\infty \le 2, \\[6pt]
0 &  \mbox{ otherwise}, 
\end{array}\right.
\end{equation*}
where $u$ is given in \eqref{MM-1}. 
One can check that 
\begin{equation*}
I_\delta(U^H) - I_\delta(U) \simeq I_\delta (u^H) - I_\delta(u) > 0. 
\end{equation*}

In the above example, the function $U$ is not non-negative and $H$ does not contain the origin. However, this point can be handled similarly as in the case $N=1$. 

\medskip
The following question remains open: 

\begin{open} Let $N \ge 1$. 
It is true that $I_\delta(u^*)\leq I_\delta(u)$ for any 
measurable $u:\R^N\to\R^+$ and $\delta>0$?
\end{open}

\subsection{Riesz two-point inequality}
	\label{sect-FPZ} 
	Let $u \in L^1_{\loc}(\R^N)$, let $H$ be a closed half-space of $\R^N$
	and let $G$ be a Young function, i.e.\ $G: [0, + \infty) \to \R$, 
	$G(0) = 0$, $G$ is non-decreasing, and $G$ is convex,
	and let $w$ be a non-negative, non-increasing radial function. 
	The above notations allow to prove the classical inequality
	\begin{equation}
	\label{ineqG}
		\iint_{\R^{2N}} G(|u^H(x) - u^H(y)|) w(|x-y|) \, dx \, dy \le \iint_{\R^{2N}} G(|u(x) - u(y)|) w(|x-y|) \, dx \, dy . 
	\end{equation}
In fact, set 
	\begin{equation*}
		{\mathscr D}^H(u, x,y):=G(u, x^H, y) + G(u, x, y^H) - G(u, x, y) - G(u, x^H, y^H),
	\end{equation*}
	and define 
	\begin{equation*}
		\mathfrak{d}^H(u): = \iint_{\R^{2N}} G(|u^H(x) - u^H(y)|) w(|x-y|) \, dx \, dy - \iint_{\R^{2N}} G(|u(x) - u(y)|) w(|x-y|) \, dx \, dy. 
	\end{equation*}
	As in \eqref{Decomposition-3-1}, we have
	\begin{equation*}
		\mathfrak{d}^H(u)=\int_A \int_B {\mathscr D}^H(x,y;u)
		\Big(w(|x-y|)- w(|x^H- y|) \Big) \, dx \, dy. 
	\end{equation*}
	We claim that 
	\begin{equation}\label{Du-1}
		{\mathscr D}^H(u, x,y) \leq 0,
	\end{equation} 
	for each $x\in A$ and $y\in B$.  Assuming this,  we then immediately get inequality \eqref{frak-in} since $w$ is non-decreasing. We now prove \eqref{Du-1}.  
	Observe that if $a \le b$ and $c \le d$ then 
	$$
	|a -c| + |b-d| \le |d-a| + |b-c|. 
	$$
	and 
	$$
	\max\{ |a -c|,  |b-d| \} \le \max\{ |d-a|, |b-c|\}. 
	$$
	It follows that, for $x \in A$ and $y \in B$,  
	$$
	|u(x) - u(y^H)| + |u(x^H)- u(y)| \le  |u(y) - u(x)|  + |u(x^H) - u(y^H)|
	$$
	and 
	$$
	\max\{ |u(x) - u(y^H)| ,  |u(x^H)- u(y)|\} \le \max\{  |u(y) - u(x)|  , |u(x^H) - u(y^H)|\}. 
	$$
	Assertion~\eqref{Du-1} follows from the properties of Young's functions of $G$. 

As a consequence of \eqref{ineqG}, one has 
\begin{equation*}
	| u^H |_{W^{s, p}(\R^N)} \le | u |_{W^{s, p}(\R^N)}, 
\end{equation*}
for $s \in (0, 1)$ and $p>1$. It follows that, for $u \in W^{s, p}(\R^N)$ with $u>0$, 
\begin{equation*}
	| u^* |_{W^{s, p}(\R^N)} \le | u |_{W^{s, p}(\R^N)}, 
\end{equation*}
where $u^*$ denotes the spherical symmetric rearrangement of $u$. By the BBM formula \eqref{BBM}, one reaches the Polya-Szeg\"o inequality.

\begin{remark}
	\rm 
	Recall that  $\H$ is the set of half-spaces which contain the origin.	One can endow $\H$ with a metric  that ensures that $H_n \to H$ if there exists a sequence of isometries $i_n: \R^N \to \R^N$ such that $H_n = i_n (H)$ and $i_n$ converges to the identity as $n \to + \infty$, moreover, $\H$ is separable with respect to this metric.  Let $\{H_n\}_{n\in\N}$ be a dense set in $\H$. For any $u\in L^p(\R^N)$,  let 
	$\{u_n\}_{n\in\N}$ be the sequence defined by 
	\begin{equation}
	\label{seq}
	u_0:=u \qquad \mbox{ and } \qquad
	u_{n+1}:=u_n^{H_1\cdots H_{n+1}} \mbox{ for } n \ge 0.
	\end{equation}
	Assume that	
	$$
	\sup_{n\in\N} I_{\delta_n}(u_n) < + \infty,\quad \text{for some $\{\delta_n\} \to 0$}.
	$$
	Then $u^* \in W^{1, p}(\R^N)$ ($p>1$), where $u^*$ denotes the Schwarz
	symmetrization of $u$.
	In fact, by \cite[Theorem 1]{jean}, we have $u_n\to u^*$ strongly in $L^p(\R^N)$
	as $n\to\infty$. Then, the assertion follows by
	the Gamma-convergence result in \cite[Theorem 2]{NgGamma}.
	If in addition, for all $\delta>0$,
	$$
	\iint_{\{|u(y)-u(x)|>\delta\}} |x-y|^{-N-p} dxdy
	=\iint_{\{|u^*(y)-u^*(x)|>\delta\}} |x-y|^{-N-p} dxdy,
	$$
	then $u$ is radially symmetric about some point $x_0\in\R^N$
	provided that ${\mathcal L}^N(\{\nabla u^*=0\})=0$.
This follows by the Brothers-Ziemer result \cite[Theorem 1.1]{bzim} jointly with
\cite[Theorem 2]{nguyen06}.
\end{remark}

\section{Radially decreasing functions and $I_\delta$}
\label{sect-compact}

For every measurable function $u:\mathbb{R}^N\to\mathbb{R}$ we define its distribution function
\[
\mu_u(t)=\big|\{x\, :\, |u(x)|>t\}\big|,\qquad t>0.
\] 
Let $0< q<\infty$ and $0<\theta<\infty$, the Lorentz space $L^{q,\theta}(\mathbb{R}^N)$ (cf.\ \cite{lorenz,lorenz2,pick}) is defined by
\[
L^{q,\theta}(\mathbb{R}^N):=\left\{u:\R^N\to\R :\ \int_0^\infty t^{\theta-1}\,\mu_u(t)^{\theta/q}\,dt<\infty\right\}.
\]
In the limit case $\theta=\infty$, this is defined by
\[
L^{q,\infty}(\mathbb{R}^N):=
\left\{u:\R^N\to\R :\ \sup_{t>0} t\,\mu_u(t)^{1/q}<\infty\right\}.
\]
We recall from \cite[Lemma 2.9]{decay} the following 

\begin{lemma}
	\label{lm:border}
	Let $0<\theta\le \infty$ and $0< q<\infty$. Let $u\in L^{q,\theta}(\mathbb{R}^N)$ be a non-negative and radially symmetric decreasing function.
	Then
	\[
	\begin{split}
	0\le u(x)&\le \left(\theta\,\omega_N^{-\frac{\theta}{q}}\,\int_0^\infty t^{\theta-1}\,\mu_u(t)^\frac{\theta}{q}\,dt\right)^\frac{1}{\theta}\, |x|^{-\frac{N}{q}},\qquad \mbox{ if } \theta<\infty,\\
	0\le u(x)&\le \left(\omega_N^{-\frac{1}{q}}\,\sup_{t>0} t\,\mu_u(t)^\frac{1}{q}\right)\,|x|^{-\frac{N}{q}},\qquad \mbox{ if } \theta=\infty.
	\end{split}
	\]
\end{lemma}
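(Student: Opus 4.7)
The plan is to exploit the fact that a non-negative radially symmetric decreasing $u$ has ball-shaped superlevel sets. Specifically, for each $t>0$ the set $\{y\in\R^N : u(y)>t\}$ is a ball centered at the origin, of radius $\rho(t)\ge 0$, so that $\mu_u(t)=\omega_N\rho(t)^N$. In particular, whenever $0<t<u(x)$ one has $x\in\{u>t\}$, hence $|x|\le\rho(t)$, which rearranges to the pointwise bound
\begin{equation*}
\mu_u(t)\ \ge\ \omega_N\,|x|^N,\qquad\text{for every } t\in(0,u(x)).
\end{equation*}
This single inequality is the only ingredient needed to convert the integrability of $\mu_u$ in the Lorentz norm into pointwise decay of $u$.

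For the case $\theta=\infty$, I would raise the previous bound to the $1/q$-th power, multiply by $t$, and take the supremum over $t\in(0,u(x))$:
\begin{equation*}
\sup_{t>0} t\,\mu_u(t)^{1/q}\ \ge\ \sup_{0<t<u(x)} t\,\omega_N^{1/q}|x|^{N/q}\ =\ u(x)\,\omega_N^{1/q}|x|^{N/q},
\end{equation*}
which, after dividing through by $\omega_N^{1/q}|x|^{N/q}$, is exactly the stated estimate.

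For the case $\theta<\infty$, the same pointwise bound on $\mu_u(t)$ is raised to the power $\theta/q$, multiplied by the weight $t^{\theta-1}$, and integrated over $(0,u(x))$; since the integrand is non-negative on $(u(x),\infty)$, the left-hand integral in the Lorentz definition dominates it, yielding
\begin{equation*}
\int_0^\infty t^{\theta-1}\mu_u(t)^{\theta/q}\,dt\ \ge\ \int_0^{u(x)} t^{\theta-1}\omega_N^{\theta/q}|x|^{N\theta/q}\,dt\ =\ \omega_N^{\theta/q}|x|^{N\theta/q}\,\frac{u(x)^\theta}{\theta}.
\end{equation*}
Extracting the $\theta$-th root and isolating $u(x)$ gives the claimed inequality.

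There is honestly no hard step: once one recognises that radial monotonicity forces the superlevel sets to be balls of volume $\mu_u(t)$, the rest is a single layer-cake style manipulation. The only mild care needed is the measurability of $\rho$ (which is automatic from monotonicity of $\mu_u$) and the behaviour at $t=u(x)$, but since we only ever use $t$ strictly less than $u(x)$ and pass to a supremum/limit from below, no issue arises.
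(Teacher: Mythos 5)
Your argument is correct. Note, however, that the paper itself does not prove this statement: it is explicitly quoted as Lemma~2.9 from the reference \cite{decay} (Brasco--Mosconi--Squassina), so there is no ``paper proof'' to compare against. The layer-cake argument you give --- observing that for a radial non-increasing $u$ and any $t<u(x)$ the ball $\{|y|\le|x|\}$ sits inside $\{u>t\}$, whence $\mu_u(t)\ge\omega_N|x|^N$, and then integrating $t^{\theta-1}\mu_u(t)^{\theta/q}$ over $(0,u(x))$ (resp.\ taking a supremum) --- is precisely the standard and essentially the only natural way to obtain this pointwise decay estimate from the Lorentz quasi-norm, and it is the argument used in the cited source. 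The constants you derive ($\theta\,\omega_N^{-\theta/q}$ raised to $1/\theta$, resp.\ $\omega_N^{-1/q}$) match the statement exactly, and your remark that measurability of $\rho(t)$ is automatic from monotonicity of $\mu_u$ is a fair, if inessential, point of care.
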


\noindent

The next proposition shows that the measure of the superlevels of a nonnegative function 
$u$ is controlled by a quantity involving $I_\delta(u^*)$. Set $p^*:=Np/(N-p)$ for $1 \le p< N$.

\begin{theorem}
	\label{decay}
Let $1 < p  < N$ and $u:\R^N\to\R$ be a non-negative function.\ Then 
there exists two positive constants $C$ and $\lambda$ depending only on $N$ and $p$ such that
\begin{equation}\label{decay-1}
\big|\{x\in\R^N: u(x)> \lambda \delta\}\big|\leq C\delta^{-\frac{Np}{N-p}} \min \big\{ I_\delta(u^*)^{\frac{N}{N-p}}, I_\delta(u^*)^{\frac{N}{N-p}} \big\}, 
\end{equation}
where $u^*$ is the Schwarz symmetric rearrangement of $u$.
Assume that there exists $\vartheta>0$ such that
\begin{equation}
\label{cond}
\int_0^\infty \frac{I_\delta(u)^{\frac{\vartheta}{p}}}{\delta} d\delta<\infty
\qquad\text{or}\qquad
\int_0^\infty \frac{I_\delta(u^*)^{\frac{\vartheta}{p}}}{\delta} d\delta<\infty.
\end{equation}
Then $u,u^* \in L^{p^*,\theta}(\mathbb{R}^N)$
and there exists a positive constant $C$ depending on $u,N,p,\vartheta$ such that
$$
0\le u^*(x)\le C |x|^{-\frac{N-p}{p}}.
$$
\end{theorem}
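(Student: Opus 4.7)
The plan is to derive the level-set estimate \eqref{decay-1} directly from the Sobolev-type inequality \eqref{Sobolev}, then unpack the definition of the Lorentz quasinorm to deduce $L^{p^*,\vartheta}$-membership, and finally apply Lemma~\ref{lm:border} to translate the integrability into the claimed pointwise decay.

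For \eqref{decay-1}, the first step is Chebyshev truncation: applying \eqref{Sobolev} to $u$ gives
\begin{equation*}
(\lambda\delta)^{p^*}\,\big|\{x \in\R^N: u(x)>\lambda\delta\}\big|\le \int_{\{u>\lambda\delta\}} |u|^{p^*}\,dx \le C\, I_\delta(u)^{p^*/p},
\end{equation*}
and since $p^*/p=N/(N-p)$, this yields one of the two terms in the minimum of \eqref{decay-1}. The other term is obtained the same way, replacing $u$ with $u^*$ in \eqref{Sobolev} and using the equimeasurability of $u$ and $u^*$ to identify the two super-level sets.

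For the Lorentz integrability, I would unfold the definition
\begin{equation*}
\|u\|_{L^{p^*,\vartheta}}^{\vartheta}\simeq \int_0^\infty t^{\vartheta-1}\,\mu_u(t)^{\vartheta/p^*}\,dt
\end{equation*}
and perform the change of variable $t=\lambda\delta$. Inserting the bound \eqref{decay-1} and using $(-p^*)(\vartheta/p^*)=-\vartheta$ together with $t^{\vartheta-1}\,dt\simeq \delta^{\vartheta-1}\,d\delta$, the integrand is controlled by $C\,\delta^{-1}\min\{I_\delta(u),I_\delta(u^*)\}^{\vartheta/p}$. Thus either of the hypotheses in \eqref{cond} makes the integral finite, so $u\in L^{p^*,\vartheta}(\R^N)$, and by equimeasurability the same is true of $u^*$.

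To conclude, I would apply Lemma~\ref{lm:border} to $u^*$ with $q=p^*$ and $\theta=\vartheta$ (the case $\vartheta<\infty$). Since $N/p^*=(N-p)/p$, the lemma delivers
\begin{equation*}
0\le u^*(x)\le C\,|x|^{-(N-p)/p},
\end{equation*}
with $C$ depending on the $L^{p^*,\vartheta}$-quasinorm of $u^*$, hence on $u$, $N$, $p$, and $\vartheta$. I do not expect a genuine obstacle beyond bookkeeping: the only mildly delicate point is to notice that both alternatives in \eqref{cond} lead to the same conclusion, which is precisely why \eqref{decay-1} is stated with a minimum and why equimeasurability between $u$ and $u^*$ must be invoked at two distinct places.
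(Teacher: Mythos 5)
Your proof is correct and follows essentially the same route as the paper: Chebyshev plus the Sobolev-type inequality \eqref{Sobolev} applied to both $u$ and $u^*$, combined with the equimeasurability $\mu_u=\mu_{u^*}$, gives \eqref{decay-1}; the change of variable in the Lorentz quasinorm then reduces \eqref{cond} to $L^{p^*,\vartheta}$-membership, and Lemma~\ref{lm:border} with $q=p^*$, $\theta=\vartheta$ delivers the pointwise decay. (You also correctly read past the evident typo in \eqref{decay-1}, where both arguments of the $\min$ appear as $u^*$ but one should be $u$, as the role of the two alternatives in \eqref{cond} makes clear.)
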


\begin{proof}
By the definition of $u^*$ we have that $\mu_{u}(t)=\mu_{u^*}(t)$ for $t>0$.
Then, by applying inequality \eqref{Sobolev} to $u^*$, we have, for all $\delta>0$,
$$
\lambda\delta\mu_u(\lambda\delta)^{\frac{N-p}{Np}}=
\lambda\delta\mu_{u^*}(\lambda\delta)^{\frac{N-p}{Np}}
\leq \left(\int_{\{|u^*|>\lambda\delta\}}|u^*|^{\frac{Np}{N-p}}dx\right)^{\frac{N-p}{Np}}\leq C I_\delta(u^*)^{\frac{1}{p}}
$$
and 
$$
\lambda\delta\mu_{u^*}(\lambda\delta)^{\frac{N-p}{Np}}  = \lambda\delta\mu_u(\lambda\delta)^{\frac{N-p}{Np}}
\leq \left(\int_{\{|u^*|>\lambda\delta\}}|u|^{\frac{Np}{N-p}}dx\right)^{\frac{N-p}{Np}}\leq C I_\delta(u)^{\frac{1}{p}}, 
$$
which implies \eqref{decay-1}. By virtue of \eqref{cond},  it follows from \eqref{decay-1} that
$$
\int_0^\infty \delta^{\vartheta-1}\mu_{u}(\delta)^{\frac{\vartheta}{p^*}}  d\delta<\infty \quad \mbox{ and } \quad 
\int_0^\infty \delta^{\vartheta-1}\mu_{u^*}(\delta)^{\frac{\vartheta}{p^*}}  d\delta<\infty,
$$
which yields $u,u^* \in L^{p^*,\theta}(\mathbb{R}^N)$ and the final assertions
follows from Lemma~\ref{lm:border}.
\end{proof}

\begin{remark}\rm
Since $L^{q,q}(\R^N)=L^q(\R^N)$, it follows that $u,u^* \in L^{p^*}(\mathbb{R}^N)$ if
\begin{equation*}
\int_0^\infty \frac{I_\delta(u)^{N/(N-p)}}{\delta} d\delta<\infty
\qquad\text{or}\qquad
\int_0^\infty \frac{I_\delta(u^*)^{N/(N-p)}}{\delta} d\delta<\infty.
\end{equation*}
\end{remark}

%

Concerning the compactness related to $I_\delta$, the following result  was shown in \cite[Theorem 2]{nguyen11}. Let $p > 1$, $(\delta_n) \to 0_+$ and $(u_n) \subset L^p(\R^N)$.  Assume that 
\begin{equation}\label{compact-1}
\{ u_n \}_{n \in \N} \mbox{ is bounded in } L^p(\R^N) \mbox{ and } \{ I_{\delta_n}(u_n) \}_{n \in \N} \mbox{ is bounded}. 
\end{equation}
Then 
\begin{equation} \label{compact-2}
\mbox{$\{u_n \}_{n \in \N}$ is pre-compact in $L^p_{\loc}(\R^N)$}.
\end{equation}

In this paper, we prove the following global compactness result: 
\begin{theorem}
	\label{globalc}
	Let $1<p<N$, $(\delta_n) \to 0_+$,  and let $\{u_n\}_{n\in\N}$ be a sequence of radially symmetric decreasing functions. Assume that 
	$$
	\big\{u_n\big\}_{n \in \N} \mbox{ is bounded in } L^p(\R^N) \mbox{ and } \big\{ I_{\delta_n}(u_n) \big\}_{n \in \N} \mbox{ is bounded}. 
	$$
	Then $\big\{ u_n\big\}_{n\in\N}$ is pre-compact in  $L^r(\R^N)$ for every $p<r<Np/(N-p)$.
\end{theorem}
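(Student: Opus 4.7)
The argument has three pieces: extract a candidate limit using the local compactness already recalled in the excerpt (\cite[Theorem 2]{nguyen11}, stated via \eqref{compact-1}--\eqref{compact-2}); upgrade $L^p_{\loc}$ convergence to $L^r_{\loc}$ convergence through a uniform \emph{global} $L^{p^*}$ bound combined with H\"older interpolation, where $p^*=Np/(N-p)$; and control the tail at infinity via the classical Strauss-type radial decay, which follows from the $L^p$ bound alone.

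First, the local compactness yields a subsequence, still denoted $\{u_n\}$, and a function $u\in L^p_{\loc}(\R^N)$ such that $u_n\to u$ in $L^p_{\loc}(\R^N)$ and, up to a further subsequence, pointwise a.e. By Fatou's lemma $u\in L^p(\R^N)$, and $u$ inherits radial symmetry and monotonicity from the $u_n$'s.

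Next, I would establish the uniform bound $\sup_n \|u_n\|_{L^{p^*}(\R^N)}<\infty$. Since each $u_n$ equals its own Schwarz rearrangement, inequality \eqref{Sobolev} applied to $u_n$ itself gives
\begin{equation*}
\int_{\{u_n>\lambda\delta_n\}} u_n^{p^*}\,dx \le C\,I_{\delta_n}(u_n)^{p^*/p},
\end{equation*}
which is bounded by assumption. On the complementary sublevel set one has $u_n^{p^*}\le(\lambda\delta_n)^{p^*-p}\,u_n^p$, so
\begin{equation*}
\int_{\{u_n\le\lambda\delta_n\}} u_n^{p^*}\,dx \le (\lambda\delta_n)^{p^*-p}\,\|u_n\|_p^p \longrightarrow 0,
\end{equation*}
because $p^*>p$, $\delta_n\to 0^+$, and $\sup_n\|u_n\|_p<\infty$. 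Hence $\{u_n\}$ is bounded in $L^{p^*}(\R^N)$, and by Fatou so is $u$. For every $R>0$ and every $r\in(p,p^*)$, the log-convexity inequality
\begin{equation*}
\|u_n-u\|_{L^r(B_R)}\le \|u_n-u\|_{L^p(B_R)}^{\alpha}\,\|u_n-u\|_{L^{p^*}(B_R)}^{1-\alpha},\qquad \frac{1}{r}=\frac{\alpha}{p}+\frac{1-\alpha}{p^*},
\end{equation*}
together with the boundedness of the $L^{p^*}$-factor and the $L^p_{\loc}$ convergence, gives $u_n\to u$ in $L^r(B_R)$.

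Finally, the radial monotonicity produces a uniform tail bound. Integrating $u_n^p$ over $B_{|x|}$ yields the pointwise estimate $u_n(x)\le C_0\,|x|^{-N/p}$, with $C_0$ depending only on $N$, $p$, and $\sup_n\|u_n\|_p$; the same bound passes to $u$. Consequently, for $r>p$,
\begin{equation*}
\int_{|x|>R}\bigl(u_n^r+u^r\bigr)\,dx \le C\,R^{-N(r-p)/p}\longrightarrow 0\quad\text{as }R\to\infty,
\end{equation*}
uniformly in $n$. Combined with the $L^r(B_R)$ convergence above, this yields $u_n\to u$ in $L^r(\R^N)$. The one step that is not a routine application of standard results is the uniform $L^{p^*}$ bound: inequality \eqref{Sobolev} only controls $u_n$ above the threshold $\lambda\delta_n$, and one has to notice that the missing piece is absorbed thanks to $\delta_n\to 0^+$ together with the $L^p$ bound; everything else is H\"older interpolation and the classical radial decay.
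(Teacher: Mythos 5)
Your proof is correct and follows essentially the same route as the paper: local compactness from \cite[Theorem 2]{nguyen11}, a uniform $L^{p^*}$-type bound coming from \eqref{Sobolev}, interpolation to upgrade from $L^p_{\loc}$ to $L^r$ on balls, and a Strauss-type tail decay from radial monotonicity plus the $L^p$ bound. You are somewhat more explicit than the paper in splitting off the sublevel set $\{|u_n|\le\lambda\delta_n\}$ to obtain the global $L^{p^*}$ control, and you phrase the conclusion via subsequence extraction rather than via a finite $\eps$-net in $L^r(\R^N)$, but these are cosmetic repackagings of the same argument.
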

\begin{proof}
From \eqref{Sobolev}, \eqref{compact-1}, and \eqref{compact-2}, we derive that 
\begin{equation}\label{compact-3}
(u_n) \mbox{ is pre-compact in } L^r_{\loc}(\R^N). 
\end{equation}
Since $(u_n)$ is decreasing and $(u_n)$ is bounded in $L^p(\R^N)$, for any $\delta > 0$ there exists $R_\delta$ such that, for all $n$,  
\begin{equation}\label{compact-part0}
|u_n(x)| \le \delta \mbox{ for } |x| > R_\delta. 
\end{equation}
Fix $\eps > 0$.   By \eqref{compact-3}, there exists a finite subset $J$ of $\N$ such that  
\begin{equation}\label{compact-part1}
\big\{u_n \in L^r (B_{R_\eps}); n \in \N  \big\} \subset \bigcup_{j \in J} \big\{u \in L^r(B_{R_\eps}): \| u - u_j\|_{L^r(B_{R_\eps})} < \eps \big\},  
\end{equation}
where $B_R$ denotes the open ball centered at the origin and of radius $R$ in $\R^N$ for $R>0$. 
On the other hand, by \eqref{compact-part0}, we have 
\begin{equation}\label{compact-part2}
\| u_n\|_{L^r(\R^N \setminus B_{R_\eps})} \le \eps^{(r - p)/r}\| u_n\|_{L^p(\R^N \setminus B_{R_\eps})}^{p/r} \le C \eps^{1 - p/r},  
\end{equation}
since $(u_n)$ is bounded in $L^p(\R^N)$.  A combination of \eqref{compact-part1} and \eqref{compact-part2} yields, for $\eps$ small enough such that $\eps < C \eps^{1 - p/r}$,  
\begin{equation}\label{compact-part3}
\big\{u_n \in L^r (\R^N);  n \in \N  \big\} \subset \bigcup_{j \in J} \big\{u \in L^r(\R^N); \| u -  u_j\|_{L^r(\R^N)} < 3 C \eps^{1- p/r} \big\}, 
\end{equation}
where $C$ is the constant in \eqref{compact-part2}. Since \eqref{compact-part3} holds for small $\eps$ and $1 - r/ p > 0$, it follows that $(u_n)$ is pre-compact in $L^r(\R^N)$. 
\end{proof}

\begin{remark} \rm Let $N \ge 1$ and $p \ge N$. It was shown in \cite[Theorem 1]{nguyen11} that if $u \in L^p(\R^N)$ and $I_\delta (u) < + \infty$ for some $\delta > 0$ then $u \in BMO_{\loc}(\R^N)$ where $BMO$ denotes the space of functions of bounded mean oscillation. By the same proof, under the same assumptions on $\{ u_n \}$,  one obtains the compactness result for $L^r(\R^N)$  for $p< r < p^*$ with $p^* = \infty$. 
\end{remark}

As a consequence of Theorem~\ref{globalc},  we have the following

\begin{corollary}
	Let $1<p<N$ and let $\{u_n\}_{n\in\N}$ be a nonnegative sequence of functions bounded in $L^p(\R^N)$ such that
	$$
	\liminf_{n\to\infty} I_{\delta_n}(u_n^*)<\infty,
	$$
	for some sequence $\{\delta_n\}_{n\in\N} \to0_+$. Then, up to a subsequence, $\{u_n^*\}_{n\in\N}$ converges strongly in $L^r(\R^N)$ to a radial decreasing function, for any $p<r<Np/(N-p)$.
\end{corollary}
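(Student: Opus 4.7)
The plan is to reduce the statement to a direct application of Theorem~\ref{globalc}. First, since Schwarz symmetrization preserves the $L^p$ norm, the sequence $\{u_n^*\}_{n\in\N}$ inherits the boundedness in $L^p(\R^N)$ from $\{u_n\}_{n\in\N}$, and by construction each $u_n^*$ is radially symmetric and non-increasing. Next, using the hypothesis that $\liminf_{n\to\infty} I_{\delta_n}(u_n^*)<\infty$, I would extract a subsequence (not relabeled) along which the quantity $I_{\delta_n}(u_n^*)$ is bounded, and with $\delta_n\to 0_+$ unchanged.

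At this stage all the hypotheses of Theorem~\ref{globalc} are fulfilled for the extracted subsequence, so $\{u_n^*\}_{n\in\N}$ is pre-compact in $L^r(\R^N)$ for every $p<r<Np/(N-p)$. Fixing such an exponent $r$, I would pass to a further subsequence converging strongly in $L^r(\R^N)$ to some limit $v$. A diagonal extraction across a countable dense set of admissible exponents $r$ would give a single subsequence that works simultaneously for every $r\in(p,Np/(N-p))$, though for the statement as written it suffices to work with one $r$ at a time.

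It remains to verify that $v$ admits a radially symmetric decreasing representative. Extracting once more, I can assume $u_n^*\to v$ pointwise almost everywhere on $\R^N$; redefining $v$ on a null set by $v(x):=\limsup_{n\to\infty} u_n^*(x)$, radial symmetry is immediate, and monotonicity follows by passing to the limsup in the inequality $u_n^*(x)\ge u_n^*(y)$ valid for all $|x|\le|y|$ and all $n$. The overall argument is essentially book-keeping on top of Theorem~\ref{globalc}; I do not foresee a genuine obstacle beyond managing the successive subsequence extractions and the identification of the correct everywhere-defined representative of $v$.
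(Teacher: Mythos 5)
Your proof is correct and follows the same route as the paper's: Schwarz symmetrization preserves the $L^p$ bound, and Theorem~\ref{globalc} applies after passing to a subsequence along which $I_{\delta_n}(u_n^*)$ is bounded. You are in fact somewhat more careful than the paper's two-line argument, which leaves implicit both the subsequence extraction needed to upgrade the $\liminf$ hypothesis to boundedness and the verification that the limit is radially symmetric and decreasing.
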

\begin{proof}
Since  the sequence $\{u_n\}_{n\in\N}$ is bounded in $L^p(\R^N)$, it follows
that sequence $\{u_n^*\}_{n\in\N}$ is also bounded in $L^p(\R^N)$ by
Cavalieri's principle. Then, by Theorem~\ref{globalc}, it follows that $u_n^*\to v$ in $L^r(\R^N)$ strongly for any $p<r<Np/(N-p)$.
\end{proof}

\section{An open problem for Riesz fractional gradients} \label{sect-PS}
Recently, a notion of {\em fractional gradients} (more precisely 
{\em distributional Riesz fractional gradients}) has been 
introduced in the literature by Shieh and  Spector in the papers \cite{spector1,spector2},
where several basic properties of local Sobolev spaces (e.g.\ Sobolev, Morrey, Hardy, Trudinger inequalities) are proven to extend to fractional spaces defined 
through this new notion. 

More precisely, the fractional gradient $D^s u(x)$ at a point $x\in\R^N$ is defined
for locally Lipschitz compactly supported functions $u:\R^N\to\R$, for any $s\in (0,1)$, by
$$
D^su(x):=c_{N,s}\int_{\R^N}\frac{u(x)-u(y)}{|x-y|^{N+s}}\frac{x-y}{|x-y|}dy,
\qquad\text{for $x\in\R^N$,}
$$
for a suitable positive constant $c_{N,s}$ depending on $N$ and $s$. 
This is reminiscent of the classical scalar notion of $\frac{s}{2}$-{\em fractional laplacian}
$$
(-\Delta)^{s/2}u(x)=C_{N,s}\int_{\R^N}\frac{u(x)-u(y)}{|x-y|^{N+s}}dy,
\qquad\text{for $x\in\R^N$,}
$$
for a suitable normalization constant $C_{N,s}$ depending on $N$ and $s$. 
Notice also that \cite{spector1,spector2}
$$
D^s u=I_{1-s}*Du,\quad u\in C^\infty_c(\R^N), 
\quad\, I_{1-s}(x):=\frac{\gamma(N,s)}{|x|^{N+s-1}}.
$$
According to \cite{spector1,spector2}, one can define, for $p>1$ and $s\in (0,1)$, the space
$$
L^{s,p}(\R^N):=\overline{C^\infty_c(\R^N)}^{\|\cdot\|_{L^p}+\|D^s\cdot\|_{L^p}}.
$$
We now formulate a related open problem.

\begin{open}
	\label{polya-sz-new}
Let $p>1$, $s\in(0,1)$ and $u\in L^{s,p}(\R^N)$ with $u\geq 0$. 
Prove or disprove that $D^su^*\in L^{s,p}(\R^N)$ and the inequality holds
\begin{equation}
\label{Polyagrad}
\int_{\R^N}|D^su^*|^pdx\leq \int_{\R^N}|D^su|^pdx.
\end{equation}
\end{open}

\noindent
In general the inequality 
\begin{equation*}
\int_{\R^N}|D^s |u||^pdx\leq \int_{\R^N}|D^su|^pdx.
\end{equation*}
for all $u\in L^{s,p}(\R^N)$
is {\em not} expected to hold. In particular, one cannot obtain 
inequality \eqref{Polyagrad} for sign-changing functions.
The solution of the above open problem would be very useful in connection
with compact injections for radially symmetric functions of $L^{s,p}(\R^N)$. 
In fact, assume  $N\geq 2$. Notice that, since for any $\varepsilon\in (0,s)$ the injection
$$
L^{s,p}(\R^N)\hookrightarrow W^{s-\eps,p}(\R^N), 
$$
is continuous (cf.\  \cite[(g) of Theorem 2.2]{spector1}) and the injection
$$
W^{s-\eps,p}_{{\rm rad}}(\R^N)\hookrightarrow L^{q}(\R^N),
$$
is compact (cf.\ \cite[Theorem II.1]{lions}) for all $p<q<p^*_{s-\eps}$	it follows that 
the injection
$$
L^{s,p}_{{\rm rad}}(\R^N)\hookrightarrow  L^{q}(\R^N),
$$
is compact for $p<q<Np/(N - sp)$. In particular, nonnegative minimizing sequences for 
$$
u\mapsto \int_{\R^N} |D^su|^pdx,
$$
could be replaced by new minimizing sequences which are radially symmetric 
decreasing and strongly converging in $L^{q}(\R^N)$ for any $p<q<Np/(N - sp)$.


\begin{remark}
	Aiming to prove \eqref{Polyagrad}, with no loss of generality one may assume 
$u\in C^\infty_c(\R^N)$.  Let $H$ be an arbitrary closed given half-space with $0\in H$. 
Define the function $\mathfrak{J}(u):\R^N\to\R$,
$$
\mathfrak{J}(u)(x):=\left|\int_{\R^N}\frac{u(x)-u(y)}{|x-y|^{N+s}}\frac{x-y}{|x-y|}dy\right|^p,
\qquad\text{for all $x\in \R^N$.}
$$	
Then $\mathfrak{J}(u)\in L^1(\R^N)$. 
Setting $v(x):=u(x^H)$ and $w(x):=u^H(x^H)$ for all $x\in\R^N$, we have
	\begin{equation}
	\label{decomposs}
	u^H(x)=v(x)+(u(x)-v(x))^+,\quad
	w(x)=u(x)-(u(x)-v(x))^+,\quad\,\text{for $x\in H$.}
	\end{equation}
	Writing $x^H=x_0+Rx$ where $R$ is a rotation, a change of variable yields
	\begin{align*}
	\mathfrak{J}(u)(x^H)&=\left|\int_{\R^N}\frac{u(x^H)-u(y)}{|x^H-y|^{N+s}}\frac{x^H-y}{|x^H-y|}dy\right|^p \\
	&=\left|\int_{\R^N}\frac{u(x^H)-u(y^H)}{|x^H-y^H|^{N+s}}\frac{x^H-y^H}{|x^H-y^H|}dy\right|^p \\
	&=\left|R\int_{\R^N}\frac{v(x)-v(y)}{|x-y|^{N+s}}\frac{x-y}{|x-y|}dy\right|^p=\mathfrak{J}(v)(x),
	\end{align*}
	and, analogously, $\mathfrak{J}(u^H)(x^H)=\mathfrak{J}(w)(x)$.
	In turn, we conclude that
	\begin{equation*}
		\int_{\R^N}\mathfrak{J}(u)dx =\int_{H}\mathfrak{J}(u)dx+\int_{H}\mathfrak{J}(v)dx,\quad\,
		\int_{\R^N} \mathfrak{J}(u^H)dx=\int_{H} \mathfrak{J}(u^H)dx+\int_{H} \mathfrak{J}(w)dx.
	\end{equation*}
	If one was be able to prove that
	\begin{equation}
	\label{keycond-pol}
	\mathfrak{J}(u^H)(x)+\mathfrak{J}(w)(x)\leq \mathfrak{J}(u)(x)+\mathfrak{J}(v)(x),\quad
	\text{for all $x\in H$,}
	\end{equation}
	then \eqref{Polyagrad} would follow by standard 
	approximations.\ In the local case \eqref{keycond-pol} follows immediately in light of 
	\eqref{decomposs}, while for $\mathfrak{J}$, which is a {\em nonlocal} function, the situation is rather unclear. 
	
	If instead one finds $u$ and $H$
	such that \eqref{keycond-pol} holds with opposite inequality, then the Riesz gradients would already fail the basic polarization inequality.

\end{remark}

\bigskip
\noindent
{\bf Acknowledgements.} The authors would like to warmly thank Daniel Spector
for providing some useful remarks about the content of his works \cite{spector1,spector2}.

\vskip30pt

\medskip

\end{document}